\titleformat*{\section}{\LARGE\bfseries}
\titleformat*{\subsection}{\Large\bfseries}
\titleformat*{\subsubsection}{\large\bfseries}
\newtheoremstyle{case}{}{}{}{}{}{:}{ }{}
\theoremstyle{case}
\newcommand{\be}{\begin{equation}}
\newcommand{\ee}{\end{equation}}
\newcommand{\ben}{\begin{eqnarray*}}
\newcommand{\een}{\end{eqnarray*}}
\newtheorem{examp}{\sc example}
\newtheorem{remk}{\sc remark}
\newtheorem{corol}{\sc corollary}
\newtheorem{lemma}{\sc lemma}
\newtheorem{theorem}{\sc theorem}
\newtheorem{defn}{\sc definition}
\newcommand{\bt}{\begin{theorem}}
\newcommand{\et}{\end{theorem}}
\newcommand{\bl}{\begin{lemma}}
\newcommand{\el}{\end{lemma}}
\newcommand{\bed}{\begin{defn}}
\newcommand{\eed}{\end{defn}}
\newcommand{\brem}{\begin{remk}}
\newcommand{\erem}{\end{remk}}
\newcommand{\bex}{\begin{examp}}
\newcommand{\eex}{\end{examp}}
\newcommand{\bcl}{\begin{corol}}
\newcommand{\ecl}{\end{corol}}
\newcommand{\NI}{\noindent}
\theoremstyle{definition}
\theoremstyle{remark}
\numberwithin{equation}{section}
\numberwithin{theorem}{section}
\numberwithin{lemma}{section}
\begin{document}

\title{\large\bf\sc On some properties of set-valued tensor complementarity problem}

\author{R. Deb$^{a,1}$ and A. K. Das$^{b,2}$\\
\emph{\small $^{a}$Jadavpur University, Kolkata , 700 032, India.}\\	
\emph{\small $^{b}$Indian Statistical Institute, 203 B. T.
	Road, Kolkata, 700 108, India.}\\
\emph{\small $^{1}$Email: rony.knc.ju@gmail.com}\\
\emph{\small $^{2}$Email: akdas@isical.ac.in}\\
}

\date{}

\maketitle

\begin{abstract}
\NI In this paper, we introduce set-valued tensor complementarity problem where the elements of the involved tensors are defined based on a set-valued mapping. We study several properties of the solution set under the framework of set-valued mapping. We provide the necessary and sufficient conditions for the zero solution of a set-valued tensor complementarity problem. We introduce limit $R_0$-property for the set of tensors and establish a connection between limit $R_0$-property and the level boundedness of the merit function of the corresponding set-valued tensor complementarity problem.\\

\noindent{\bf Keywords:} Set-valued tensor complementarity problem, $S$-tensor, semi-positive tensor, $R_0$-tensor, merit function, limit $R_0$-property.\\

\noindent{\bf AMS subject classifications:} 15A69, 90C30, 90C33. 
\end{abstract}
\footnotetext[1]{Corresponding author}

\section{Introduction}
Song and Qi introduced the tensor complementarity problem where the associated functions are the special polynomials constructed with the help of tensors. For details see \cite{song2017properties} and \cite{song2015properties}. Huang and Qi \cite{huang2017formulating} showed that the multilinear non-cooperative game can be formulated as a tensor complementarity problem, which establishes a connection between these two classes of problems.

\NI Given an $m$th order $n$ dimensional tensor $\mathcal{B}\in T_{m,n}$ and an $n$ dimensional vector $p\in \mathbb{R}^n,$ the tensor complementarity problem is to find $v \in \mathbb{R}^n$ such that
\begin{equation*}\label{ Tensor Complementarity problem}
		p + \mathcal{B}v^{m-1}\in \mathbb{R}^n_+, \;\;\;\;  v\in \mathbb{R}^n_+, \;\;\;\; \mbox{and}\;\;\;\; v^{T}(p + \mathcal{B}v^{m-1})=0.
\end{equation*}
This problem is denoted as TCP$(\mathcal{B},p).$ For $m=2$ the tensor complementarity reduces to a linear complementarity problem. The linear complementarity problem is defined as follows. Given a matrix $B\in \mathbb{R}^{n \times n}$ and a vector $p\in \mathbb{R}^n,$ the linear complementarity problem, denoted by LCP$(B,p)$ is to find $v \in \mathbb{R}^n$ which satisfies the following conditions
\begin{equation*}\label{linear comp problem}
		p + B v \in \mathbb{R}^n_+, \;\;\;\; v \in \mathbb{R}^n_+, \;\;\;\; \mbox{and}\;\;\;\; v^{T}(p + Bv)=0.
	\end{equation*}

\NI The concept of complementarity takes into account a wide range of optimization problem. Linear programming, linear fractional programming, convex quadratic programming, and the bimatrix game problem are among the issues that can be classified as linear complementarity problems. It is widely used in operations research, control theory, mathematical economics and engineering. It is also well-recognized in the literature on mathematical programming. For recent works on this problem and applications see \cite{mohan2001more}, \cite{mohan2001classes}, \cite{neogy2006some}, \cite{neogy2005almost}, \cite{mohan2004note}, \cite{das2018invex}, \cite{dutta2023some}, \cite{jana2021iterative}, \cite{jana2018semimonotone}, \cite{neogy2008mixture} and \cite{neogy2016optimization} references cited therein.

\NI The well-known linear complementarity problem served as the initial inspiration for the notion of PPT. The PPT fundamentally involves transforming the matrix of a linear system so that unknowns can be exchanged for the relevant entries. For details see \cite{das2017finiteness}, \cite{mondal2016discounted}, \cite{neogy2012generalized}, \cite{das2016properties} and \cite{neogy2005principal}. Numerous matrix classes and their subclasses have undergone extensive research as a result of computing, complexity theory, and the theoretical foundations of linear complementarity problems. For recent work on this problem and applications see \cite{jana2019hidden}, \cite{das2016generalized}, \cite{dutta2022on}, \cite{neogy2011singular}, \cite{neogy2009modeling}, \cite{das2018some}, \cite{jana2018processability}. For game problem and multivariate analysis see \cite{mondal2016discounted}, \cite{jana2021more}, \cite{jana2018processability}, \cite{neogy2013weak}, \cite{neogy2008mathematical}, \cite{neogy2005linear} and references cited therein.

\NI Many mathematical concepts are extended in the framework of set-valued mapping. The set-valued mapping becomes famous after Kakutani fixed point theorem, a generalization of Brouwer fixed point theorem. A function of  a set-valued mapping maps an individual element of a set to a subset of a set. We consider set-valued mappings in case of modeling uncertainties, disturbances or errors for which stability is an issue to study ill-posed or inverse problems. Here solution set has some topological properties such as semi-continuity, well-posedness and other related properties. In this case lack of bijection and continuity are the issues that does not allow to characterize many important properties of the function along with its domain or codomain. A significant component of sensitivity analysis \cite{facchinei2007finite} of complementarity problem is the set-valued complementarity problem. Set-valued nonlinear complementarity problem provides a unified framework for nonlinear complementarity problem, exteded complementarity problem, implicit complementarity problem, quasi-variational inequality, mixed nonlinear complementarity problem and minimax programming problem. For details on set-valued complementarity problem see \cite{zhou2013set}. Motivated by this study, here we introduce the set-valued tensor complementarity problem as a subclass of set-valued complementarity problem. It is difficult to study set-valued complementarity problem but by imposing the tensor structure we can have some existence and uniqueness theorems.
	
The paper is organized as follows. Section 2 presents some basic notations and results which will be needed in the subsequent sections. In section 3 we introduce the set-valued tensor complementarity problem and prove the necessary condition for the feasible solution set. We establish a connection between two sets to study $S$-tensor. We consider the merit function for the set-valued tensor complementarity problem and establish the necessary and sufficient conditions for the level boundedness in connection with limit $R_0$-property of set of tensors.\\

\section{Preliminaries}
The basic ideas and the notation used throughout the text are outlined in this section. All the vectors, matrices and tensors considered here are with real entries. For any positive integer $n,$ the set $\{ 1,...,n \}$ is denoted by $[n]$ . Denote $N_\infty =\cup_{n=1}^{\infty} \{ \{ n, n+1,... \}\}.$ Let $\mathbb{R}^n$ denote the $n$-dimensional Euclidean space and $\mathbb{R}^n_+ =\{ v\in \mathbb{R}^n : v\geq 0 \}.$ Any vector $v\in \mathbb{R}^n$ is a column vector unless specified otherwise. Let $e$ be the vector with all component being $1.$ The Euclidean norm of a vector $v$ is defined as $\|v\|_2 = \sqrt{|v_1^2| + \cdots |v_n^2|}.$ An $m$th order $n$ dimensional real tensor $\mathcal{B}= (b_{i_1 ... i_m}) $ is a multidimensional array of entries $b_{i_1 ... i_m} \in \mathbb{R}$ where $i_j \in [n]$ with $j\in [m]$. The set of all $m$th order $n$ dimensional real tensors are denoted by $T_{m,n}.$ Shao \cite{shao2013general} introduced a product of tensors. Let $\mathcal{A}$ with order $q \geq 2$ and $\mathcal{B}$ with order $k \geq 1$ be two $n$-dimensional tensors. The product of $\mathcal{A}$ and $\mathcal{B}$ is a tensor $\mathcal{C}$ of order $(m-1)(k-1) + 1$ and dimension $n$ with entries $c_{j \alpha_1 \cdots \alpha_{m-1} } =\sum_{j_2, \cdots ,j_m \in [n]} a_{j j_2 \cdots j_m} b_{j_2 \alpha_1} \cdots b_{j_m \alpha_{m-1}}$ where $j \in [n] $, $\alpha_1, \cdots, \alpha_{m-1} \in [n]^{k-1}.$

\NI Then for a tensor $\mathcal{B}\in T_{m,n}$ and $v\in \mathbb{R}^n,\; \mathcal{B}v^{m-1}\in \mathbb{R}^n $ is a vector defined by
	\[ (\mathcal{B} v^{m-1})_i = \sum_{i_2, ...,i_m =1}^{n} b_{i i_2 ...i_m} v_{i_2} \cdots v_{i_m} , \;\forall \; i \in [n], \]
	and $\mathcal{B}v^m\in \mathbb{R} $ is a scalar defined by
 \begin{equation*}
     v^T \mathcal{B}v^{m-1} = \mathcal{B}v^m = \sum_{i_1,...,i_m =1}^{n} b_{i_1  ...i_m} v_{i_1}  \cdots v_{i_m}.
 \end{equation*}	

\noindent Given a vector $p \in \mathbb{R}^n$ and a tensor $\mathcal{B} \in T_{m,n}$ the set of feasible solution of TCP$(\mathcal{B},p)$ is defined as FEA$(\mathcal{B},p)= \{v\in \mathbb{R}^n_+ : p + \mathcal{B}v^{m-1} \geq 0\}$ and the solution set of TCP$(\mathcal{B},p)$ as SOL$(\mathcal{B},p)= \{v\in$ FEA$(\mathcal{B},p) :  v^{T}(p + \mathcal{B}v^{m-1})= 0\}.$ 

\NI We consider some definitions and results which are required for the next sections.

\begin{defn}\cite{shao2016some}
The $i$th row subtensor of $\mathcal{B}= (b_{i_1 ... i_m}) \in T_{m,n}$ is denoted by $R_i(\mathcal{B})$ and its entries are given as $(R_i(\mathcal{B}))_{i_2 ... i_m}=(b_{i i_2... i_m})$ where $i_j\in [n]$ and $2\leq j\leq m.$
\end{defn}

\begin{defn}\cite{wets1998variational}
A function $f: \mathbb{R}^n \to \mathbb{R}^n$ is said to be level bounded if for every $\alpha \geq 0$ the level set $\{v\in \mathbb{R}^n: f(v) \leq \alpha \}$ is bounded.
\end{defn}

\begin{defn}\cite{song2015properties}
A tensor $\mathcal{B}\in T_{m,n} $ is said to be a $Q$-tensor if for every $p\in \mathbb{R}^n$, TCP$(\mathcal{B},p)$ has a solution.
\end{defn}

\begin{defn}\cite{song2015properties}
A tensor $\mathcal{B}\in T_{m,n} $ is said to be a $S$-tensor if the system $\mathcal{B}v^{m-1} >0, \; v > 0 $ has a solution.
\end{defn}

\begin{defn}\cite{deb2023more}, \cite{song2017properties}
A tensor $\mathcal{B}\in T_{m,n} $ is (strictly) semipositive tensor if for each $v\in \mathbb{R}^n_+ \backslash \{0\}$, $\exists \; k\in [n]$ such that $v_k>0$ and $(\mathcal{B}v^{m-1})_k\; \geq 0\; (>0)$.
\end{defn}

\begin{defn}\cite{song2016properties}, \cite{song2015properties}
A tensor $\mathcal{B}\in T_{m,n} $ is said to be a $R_0$-tensor if the TCP$(\mathcal{B},0)$ has unique zero solution. 
\end{defn}

\begin{theorem}\cite{song2016properties}\label{boundedness theorem for R_0-tensor}
    If $\mathcal{B}\in T_{m,n}$ is a an $R_0$-tensor then for $p \in \mathbb{R}^n,$ the solution set of the TCP$(\mathcal{B},p)$ is bounded.
\end{theorem}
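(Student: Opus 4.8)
The plan is to argue by contradiction, exploiting the homogeneity of the map $v \mapsto \mathcal{B}v^{m-1}$ (which is homogeneous of degree $m-1$) together with the defining property of an $R_0$-tensor. Suppose SOL$(\mathcal{B},p)$ were unbounded. Then I could choose a sequence $\{v^k\} \subseteq$ SOL$(\mathcal{B},p)$ with $\|v^k\|_2 \to \infty$. Each iterate satisfies $v^k \geq 0$, $p + \mathcal{B}(v^k)^{m-1} \geq 0$, and $(v^k)^T(p + \mathcal{B}(v^k)^{m-1}) = 0$. Normalizing by setting $u^k = v^k/\|v^k\|_2$ produces a sequence on the unit sphere, which is compact; passing to a subsequence I may assume $u^k \to u$ with $\|u\|_2 = 1$, and in particular $u \neq 0$. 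Since each $u^k \geq 0$ and the nonnegative orthant is closed, $u \geq 0$.

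First I would recover feasibility of $u$ for the homogeneous problem. Dividing the inequality $p + \mathcal{B}(v^k)^{m-1} \geq 0$ by $\|v^k\|_2^{m-1}$ and using homogeneity gives $\|v^k\|_2^{-(m-1)} p + \mathcal{B}(u^k)^{m-1} \geq 0$. Because $m \geq 2$ we have $\|v^k\|_2^{m-1} \to \infty$, so the first term vanishes; letting $k \to \infty$ and using continuity of $v \mapsto \mathcal{B}v^{m-1}$ yields $\mathcal{B}u^{m-1} \geq 0$.

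Next I would pass the complementarity condition to the limit. Rewriting $(v^k)^T(p + \mathcal{B}(v^k)^{m-1}) = 0$ as $(v^k)^T p + \mathcal{B}(v^k)^m = 0$ and dividing by $\|v^k\|_2^m$ gives $\|v^k\|_2^{-m}(v^k)^T p + \mathcal{B}(u^k)^m = 0$. The first term equals $\|v^k\|_2^{-(m-1)}(u^k)^T p$, which tends to $0$ because $(u^k)^T p$ stays bounded by $\|p\|_2$ while $\|v^k\|_2^{m-1} \to \infty$. Hence in the limit $\mathcal{B}u^m = u^T \mathcal{B}u^{m-1} = 0$. Collecting the three conclusions, $u \geq 0$, $\mathcal{B}u^{m-1} \geq 0$ and $u^T\mathcal{B}u^{m-1} = 0$, I find that $u$ solves TCP$(\mathcal{B},0)$. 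Since $u \neq 0$, this contradicts the assumption that $\mathcal{B}$ is an $R_0$-tensor, for which $0$ is the unique solution of TCP$(\mathcal{B},0)$. Therefore SOL$(\mathcal{B},p)$ is bounded.

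The routine bookkeeping (extracting the subsequence, closedness of $\mathbb{R}^n_+$) is harmless; the step that carries the real content is the simultaneous passage to the limit of both the feasibility inequality and the complementarity equality. The point to watch is that the inhomogeneous datum $p$ enters at a strictly lower degree than the leading tensor terms, so dividing by the correct power of $\|v^k\|_2$ (namely $m-1$ for feasibility and $m$ for complementarity) is exactly what makes the $p$-contributions disappear while the homogeneous parts survive intact; this is where the hypothesis $m \geq 2$ is used.
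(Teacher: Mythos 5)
Your argument is correct: the paper does not prove this statement itself but imports it from the cited reference, and the proof there is exactly the normalization-and-homogeneity argument you give (divide feasibility by $\|v^k\|_2^{m-1}$ and complementarity by $\|v^k\|_2^{m}$, pass to a limit point $u$ on the unit sphere, and contradict the $R_0$-property with the nonzero solution $u$ of TCP$(\mathcal{B},0)$). No gaps; the bookkeeping with the subsequence and the closedness of $\mathbb{R}^n_+$ is handled properly.
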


\begin{defn}
     For a mapping $F: \mathbb{R}^n \to \mathbb{R}^m,$ define
\begin{equation*}
    \liminf_{v\to \bar{v}} F(v)= \left( \begin{array}{c}
        \liminf_{v\to \bar{v}} F_1(v) \\
        \vdots \\
        \liminf_{v\to \bar{v}} F_m(v)
    \end{array} \right)
\end{equation*}
\end{defn}

\begin{defn}\cite{aubin2009set}, \cite{wets1998variational}
    Given a set-valued mapping $H:\mathbb{R}^n \rightrightarrows \mathbb{R}^m,$ define $\limsup_{v\to \bar{v}}$ $H(v) = \{ u : \;\exists \; v^n \to \Bar{v}, \; \exists \; u^n \to u \mbox{ with } u^n \in H(v^n) \},$ $\liminf_{v\to \bar{v}} H(v) = \{ u : \;\forall \; v^n \to \Bar{v}, \; \exists \; u^n \to u \mbox{ with } u^n \in H(v^n) \},$ $\limsup_{v\to \infty} H(v) = \{ u : \;\exists \; v^n \to \infty, \; \exists \; u^n \to u \mbox{ with } u^n \in H(v^n) \}$ and $ \liminf_{v\to \infty} H(v) = \{ u : \;\forall \; v^n \to \infty, \; \exists \; u^n \to u \mbox{ with } u^n \in H(v^n) \}.$
\end{defn}

\begin{defn}\cite{aubin2009set}
    A function $H:\mathbb{R}^n \rightrightarrows \mathbb{R}^m,$ is outer semicontinuous at $\Bar{v},$ if, $\limsup_{v\to \bar{v}}$ $H(v) \subset H(\Bar{v})$ and inner semicontinous at $\Bar{v},$ if $\liminf_{v\to \bar{v}} H(v) \supset H(\Bar{v}).$ The function $H$ is continous at $\Bar{v}$ if it is both outer semicontinous and inner semicontinous at $\Bar{v}.$
\end{defn}

\begin{defn}\cite{aubin2009set}
    Let $V$ and $W$ be two metric spaces. A set-valued map $H$ from $V$ to $W$ is characterised by its graph which is denoted by Graph$(H)$ and is defined by
    \begin{equation*}
        \mbox{Graph}(H)=\{ (v,w)\in V\times W : \; w \in H(v) \}.
    \end{equation*}
\end{defn}

\begin{remk}
There are two different ways to define the inverse image of a subset $M$ by a set-valued map $H$:\\
\NI (a) $H^{-1}(M)=\{ v: H(v)\cap M \neq \emptyset \}$.\\
\NI (b) $H^{+1}(M)=\{ v: H(v)\subset M \}$.\\
Here $H^{-1}(M)$ is said to be the inverse image of $M$ by $H$ and $H^{+1}(M)$ is said to be the core of $M$ by $H.$
\end{remk}

\begin{theorem}\cite{aubin2009set}\label{theorem 1 for semicotinuity}
    A set-valued mapping $H:V\rightrightarrows W$ is upper semicontinuous at $v$ if the core of any neighbourhood of $H(v)$ is a neighbourhood of $v$ and a set-valued map is lower semicontinuous at $v$ if the inverse image of any open subset intersecting $H(v)$ is a neighbourhood of $v.$  
\end{theorem}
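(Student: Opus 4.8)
The plan is to treat the statement as a pair of characterizations and to derive each half by directly unwinding the open-set definitions of upper and lower semicontinuity in terms of the core $H^{+1}$ and the inverse image $H^{-1}$ introduced in the preceding remark. I take as the primitive definitions that $H$ is upper semicontinuous at $v$ when every open $U$ with $H(v) \subseteq U$ admits a neighbourhood $M$ of $v$ such that $H(v') \subseteq U$ for all $v' \in M$, and lower semicontinuous at $v$ when every open $U$ meeting $H(v)$ admits a neighbourhood $M$ of $v$ such that $H(v') \cap U \neq \emptyset$ for all $v' \in M$. The computational heart is the pair of tautologies
\begin{equation*}
H(v') \subseteq U \;\;\forall v' \in M \;\Leftrightarrow\; M \subseteq H^{+1}(U), \qquad H(v') \cap U \neq \emptyset \;\;\forall v' \in M \;\Leftrightarrow\; M \subseteq H^{-1}(U),
\end{equation*}
which are immediate from $H^{+1}(U) = \{v' : H(v') \subseteq U\}$ and $H^{-1}(U) = \{v' : H(v') \cap U \neq \emptyset\}$. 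Since the stated ``if'' is really a characterization, I would prove both implications in each case.

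For the upper semicontinuity part, the forward direction fixes a neighbourhood $N$ of $H(v)$, picks an open $U$ with $H(v) \subseteq U \subseteq N$, and invokes upper semicontinuity to produce a neighbourhood $M$ of $v$ with $H(M) \subseteq U \subseteq N$; then $M \subseteq H^{+1}(N)$, so $H^{+1}(N)$ contains a neighbourhood of $v$ and is therefore itself a neighbourhood of $v$. For the converse, given any open $U \supseteq H(v)$, the hypothesis applied to the neighbourhood $U$ of $H(v)$ makes $H^{+1}(U)$ a neighbourhood of $v$, and any neighbourhood $M \subseteq H^{+1}(U)$ of $v$ satisfies $H(M) \subseteq U$, which is exactly the defining property. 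The lower semicontinuity part is even more direct: for an open $O$ with $O \cap H(v) \neq \emptyset$, lower semicontinuity supplies a neighbourhood $M$ of $v$ with $M \subseteq H^{-1}(O)$, so $H^{-1}(O)$ is a neighbourhood of $v$; conversely, the hypothesis that $H^{-1}(O)$ is a neighbourhood of $v$ returns the defining condition verbatim.

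The only points requiring care — and these are mild bookkeeping matters rather than genuine obstacles — are the standard topological facts that a set is a neighbourhood of a point precisely when it contains an open set containing that point (so one may pass freely between arbitrary neighbourhoods and open neighbourhoods of $H(v)$), and that any superset of a neighbourhood of $v$ is again a neighbourhood of $v$ (used to conclude that $H^{+1}(N)$ and $H^{-1}(O)$ are neighbourhoods once shown to contain one). Because the two halves are logically independent and each collapses to a one-line manipulation after these conventions are fixed, I expect no substantive difficulty; the proof is essentially a transcription of the open-set definitions of semicontinuity into the core and inverse-image language of the preceding remark.
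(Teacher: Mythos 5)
Your argument is correct: the paper imports this statement from \cite{aubin2009set} as background and gives no proof of its own, and your unwinding of the open-set definitions of upper and lower semicontinuity through the identities $M \subseteq H^{+1}(U) \Leftrightarrow H(M)\subseteq U$ and $M \subseteq H^{-1}(U) \Leftrightarrow H(v')\cap U \neq \emptyset$ for all $v'\in M$ is exactly the standard argument from that reference. You in fact prove the slightly stronger ``if and only if'' characterization rather than just the one-directional ``if'' as stated, which is harmless and consistent with how the result is used later in the paper.
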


\begin{theorem}\cite{aubin2009set}\label{theorem 2 for semicotinuity}
Consider a metric space $V,$ two normed spaces $W$ and $Z,$ two set-valued maps $G:V\rightrightarrows W$ and $H:V\rightrightarrows Z$ respectively, and a (single-valued) map $h:V\times Z\rightarrow W$ satisfying the following assumptions:\\
\NI (a) $G$ and $H$ are lower semicontinuous with convex values\\   
\NI (a) $h$ is continuous\\
\NI (a) $u \mapsto h(v,u)$ is affine for all $v\in V$\\
\NI We posit the following condition:\\
\NI $\forall \; v\in V, \exists\; \alpha, \; \delta,\; c,\; \beta >0$ such that $\forall \; v^{\prime} \in B(v,\delta)$ we have $\alpha B_W \subset h(v^{\prime}, H(v^{\prime})\cap \beta B_Z)- G(v^{\prime}).$\\
Then the set-valued map $R:V \rightrightarrows Z$ defined by $R(v)= \{ u\in H(v): h(v,u) \in G(v)\}$ is lower semicontinuous with nonempty convex values.
\end{theorem}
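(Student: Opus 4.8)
The plan is to settle the convexity and nonemptiness of the values immediately and then to devote the whole argument to lower semicontinuity. Writing $R(v)=H(v)\cap\{u:h(v,u)\in G(v)\}$, convexity is immediate: $H(v)$ is convex by hypothesis, and since $u\mapsto h(v,u)$ is affine and $G(v)$ is convex, the set $\{u:h(v,u)\in G(v)\}$ is the preimage of a convex set under an affine map, hence convex, so $R(v)$ is an intersection of two convex sets. For nonemptiness, apply the standing condition at $v'=v$ (legitimate since $v\in B(v,\delta)$) to the point $0\in\alpha B_W$: there are $u\in H(v)\cap\beta B_Z$ and $g\in G(v)$ with $h(v,u)-g=0$, whence $h(v,u)=g\in G(v)$ and $u\in R(v)$.

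For lower (inner) semicontinuity it suffices, by the $\liminf$ definition (equivalently the characterization in Theorem~\ref{theorem 1 for semicotinuity}), to fix $\bar{v}$, a point $\bar{u}\in R(\bar{v})$ and $\varepsilon>0$, and to produce $\delta'>0$ so that every $v'$ with $d(v',\bar{v})<\delta'$ carries some $u'\in R(v')$ with $\|u'-\bar{u}\|<\varepsilon$. First I would manufacture an approximate solution: lower semicontinuity of $H$ at $\bar{v}$ (with $\bar{u}\in H(\bar{v})$) yields $u_0\in H(v')$ near $\bar{u}$, and lower semicontinuity of $G$ (with $h(\bar{v},\bar{u})\in G(\bar{v})$) yields $g_0\in G(v')$ near $h(\bar{v},\bar{u})$. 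By continuity of $h$ the residual $r_0:=h(v',u_0)-g_0$ can be forced to satisfy $\|r_0\|<\alpha/2$ by shrinking $\delta'$.

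The core step is an iterative correction that kills the residual while displacing $u_0$ by an amount controlled by $\|r_0\|$. Given $u_k\in H(v')$, $g_k\in G(v')$ with $r_k:=h(v',u_k)-g_k\neq0$, put $t_k=\|r_k\|/\alpha\in[0,1]$ and apply the condition to $-\alpha r_k/\|r_k\|\in\alpha B_W$ to get $\hat{u}_k\in H(v')\cap\beta B_Z$ and $\hat{g}_k\in G(v')$ with $h(v',\hat{u}_k)-\hat{g}_k=-\alpha r_k/\|r_k\|$. Setting $u_{k+1}=(1-t_k)u_k+t_k\hat{u}_k\in H(v')$ and $g_{k+1}=(1-t_k)g_k+t_k\hat{g}_k\in G(v')$ and using that $h(v',\cdot)$ is affine, one computes $r_{k+1}=(1-t_k)r_k+t_k(-\alpha r_k/\|r_k\|)=-t_k r_k$, so $\|r_{k+1}\|=\|r_k\|^2/\alpha\le\tfrac12\|r_k\|$, whereas $\|u_{k+1}-u_k\|\le t_k(\beta+\|u_k\|)=(\beta+\|u_k\|)\|r_k\|/\alpha$. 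The displacements are therefore summable with $\sum_k\|u_{k+1}-u_k\|\le C\|r_0\|$ (the norms $\|u_k\|$ staying bounded by a short bootstrap), so $\{u_k\}$ and $\{g_k\}$ are Cauchy; their limits give $u'\in H(v')$, $g'\in G(v')$ with $h(v',u')=g'$, i.e. $u'\in R(v')$, and $\|u'-\bar{u}\|\le\|u_0-\bar{u}\|+C\|r_0\|<\varepsilon$ once $\delta'$ is small.

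I expect the main obstacle to be exactly the control of this correction: a single use of the condition already produces an exact solution of $h(v',u)=g$, but typically one far from $u_0$, so the scheme must be arranged so that each correction is proportional to the current residual. This is achieved by rescaling the target to norm $\alpha$ and then taking the convex combination with the small weight $t_k=\|r_k\|/\alpha$, and it is here that the affineness of $h$ (to turn convex combinations of points into convex combinations of residuals) and the convexity of the values of $H$ and $G$ (to keep the iterates feasible) are used in an essential way. The remaining care is routine: the boundedness of $\{u_k\}$ and completeness of $W,Z$ needed for the Cauchy sequences to converge inside the closed convex values, and the fact that the constants $\alpha,\beta,\delta$ are, by hypothesis, uniform over a neighbourhood of $\bar{v}$, which is what legitimises choosing $\delta'\le\delta$ above.
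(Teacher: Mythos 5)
The paper gives no proof of this statement: it is quoted as a known result from \cite{aubin2009set} and used as a black box, so there is no internal argument to compare against. Your proof is, in substance, the standard one from that source: convexity and nonemptiness are read off directly (affine preimage of a convex set intersected with a convex set; the transversality condition applied to $0\in\alpha B_W$ at $v'=v$), and lower semicontinuity comes from an iterative correction in which the uniform inclusion $\alpha B_W\subset h(v',H(v')\cap\beta B_Z)-G(v')$ is invoked at each step with the convex weight $t_k=\|r_k\|/\alpha$; your identities $r_{k+1}=-t_kr_k$, $\|r_{k+1}\|=\|r_k\|^2/\alpha$ and the resulting summable displacements are correct, as is the bootstrap keeping $\|u_k\|$ bounded. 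The one point that does not follow from the hypotheses as transcribed here is the final passage to the limit: for $u'=\lim u_k$ to lie in $H(v')$ and for $\lim g_k=h(v',u')$ to lie in $G(v')$ you need $Z$ and $W$ complete and the values of $H$ and $G$ closed, neither of which is stated (the original theorem is set in Banach spaces). You flag this yourself, and it is best viewed as a defect of the quoted statement rather than of your scheme; the unused constant $c$ in the hypothesis is a transcription artifact of the same kind.
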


In this paper we always assume that the set-valued mapping $\Omega$ is closed valued, i.e. $\Omega(v)$ is closed $\forall \; v\in \mathbb{R}^n.$

\section{Main results}

We begin by introducing the definition of set-valued tensor complementarity problem (SVTCP$(\mathcal{B}, p, \Omega)$).\\

The set-valued tensor complentarity problem (SVTCP$(\mathcal{B}, p, \Omega)$) is defined as follows:
\NI Find a vector $v\in \mathbb{R}^n$ such that
\begin{equation}\label{SICP}
    v \geq 0,\;\; H(v,\omega) \geq 0, \;\; v^T H(v,\omega) = 0,\;\; \omega \in \Omega(v)
\end{equation}
where $H:\mathbb{R}^n \times \mathbb{R}^m \to \mathbb{R}^n$ and $\Omega :\mathbb{R}^n \rightrightarrows \mathbb{R}^m$ are set-valued mapping, $F= \mathcal{B}(\omega)v^{m-1} + p(\omega)$ and $\Omega$ is a set in $\mathbb{R}^k.$ The solution set of SVTCP$(\mathcal{B}(\omega),p(\omega), \Omega)$ is denoted by SOL$(\mathcal{B}(\omega),p(\omega),\Omega)= \{v: v\geq0,\; \mathcal{B}(\omega)v^{m-1}+ p(\omega) \geq 0,\;  v^{T}(\mathcal{B}(\omega)v^{m-1}+ p(\omega))= 0, \; \forall\; \omega \in \Omega\}.$\\

\NI In the theory of tensor complementarity problem an $S$-tensor gives condition for the feasibility of the solution set of TCP. Consider the sets $A=\{v: v> 0, \; \mathcal{B}v^{m-1} > 0\}$ and $A^{\prime}=\{v: v\neq 0,\; v \geq 0,\;  \mathcal{B}v^{m-1} > 0\}.$ In tensor complementarity theory for an $S$-tensor $A$ is nonempty. Clearly $A\neq \emptyset \implies A^{\prime} \neq \emptyset.$ For an $S$-tensor $A^{\prime}\neq \emptyset \implies A \neq \emptyset.$ However this kind of implication fails to hold in case of set-valued complementarity problem. i.e., for the sets $C=\{v: v> 0,\;  \mathcal{B}(\omega)v^{m-1} > 0, \; \mbox{ for some } \omega \in \Omega (v) \}$ and $C^{\prime} = \{ v: v\neq0,\; v \geq 0, \;  \mathcal{B}(\omega)v^{m-1} > 0, \; \mbox{ for some } \omega \in \Omega (v)\},$ $C^{\prime}\neq \emptyset$ does not imply $C\neq \emptyset.$ Consider the following example which illustrates this phenomenon.

\begin{examp}
Let $\mathcal{B}(\omega)$ be of order $3$ and of dimension $2.$ Let the elements of $\mathcal{B}(\omega)$ be given as $b_{111}(\omega) = b_{222}(\omega) = \omega,$ and other elements are zero. Let $\Omega (v)= \left\{ \begin{array}{cc}
        \{0,1\}, & v=(1,0)^T \in \mathbb{R}^2 ;\\
        \{0\}, & \mbox{ otherwise. }
    \end{array} \right.$
If $\mathcal{B}(\omega) v^{m-1} >0,$ then $\omega =1,$ and such case holds only when $v=(1,0)^T.$ Therefore, $v\in C^{\prime}.$ However $v\notin C$ and $C=\emptyset.$
\end{examp}

Here we present a condition under which for an $S$-tensor we claim $C^{\prime}\neq \emptyset \implies C\neq \emptyset.$

\begin{theorem}
    If $\Omega(v)$ is inner semicontinuous and $\mathcal{B}(\omega)$ is continuous then $C^{\prime}\neq \emptyset \implies C\neq \emptyset.$
\end{theorem}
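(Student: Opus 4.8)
The plan is to start from a witness $v^* \in C'$ and perturb it into the strictly positive orthant while preserving both the multiplier membership $\omega \in \Omega(\cdot)$ and the strict inequality $\mathcal{B}(\omega)(\cdot)^{m-1} > 0$. By definition of $C'$ there is a point with $v^* \geq 0$, $v^* \neq 0$, together with an $\omega^* \in \Omega(v^*)$ satisfying $\mathcal{B}(\omega^*)(v^*)^{m-1} > 0$. The natural perturbation is $v^n := v^* + \tfrac{1}{n}e$, which satisfies $v^n > 0$ for every $n$ and $v^n \to v^*$. The goal is then to exhibit one such $v^n$ lying in $C$.

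First I would invoke inner semicontinuity of $\Omega$ at $v^*$. Since $\omega^* \in \Omega(v^*) \subset \liminf_{v\to v^*}\Omega(v)$, the defining property of the inner limit, applied to the specific sequence $v^n \to v^*$, yields a sequence $\omega^n \to \omega^*$ with $\omega^n \in \Omega(v^n)$ for all $n$. This is the crucial step: inner semicontinuity guarantees that a feasible selection of $\omega$ survives along our chosen approach to $v^*$, not merely along some unspecified sequence.

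Next I would use continuity to transport the strict inequality. Because each entry $b_{i_1\cdots i_m}(\omega)$ is continuous in $\omega$ and $(\mathcal{B}(\omega)v^{m-1})_i = \sum_{i_2,\dots,i_m} b_{i i_2\cdots i_m}(\omega)\,v_{i_2}\cdots v_{i_m}$ is a finite sum of products of continuous functions, the map $(\omega,v) \mapsto \mathcal{B}(\omega)v^{m-1}$ is jointly continuous. Hence $\mathcal{B}(\omega^n)(v^n)^{m-1} \to \mathcal{B}(\omega^*)(v^*)^{m-1} > 0$. Since the limiting vector is strictly positive componentwise, there exists $N$ with $\mathcal{B}(\omega^N)(v^N)^{m-1} > 0$. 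Fixing such an $N$, the point $v^N$ satisfies $v^N > 0$, $\omega^N \in \Omega(v^N)$, and $\mathcal{B}(\omega^N)(v^N)^{m-1} > 0$, so $v^N \in C$ and thus $C \neq \emptyset$.

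The main obstacle is conceptual rather than computational: one must pair the right notion of set-valued continuity with the right approximating sequence. Inner semicontinuity is precisely the hypothesis that lets us fix the perturbation $v^n = v^* + \tfrac{1}{n}e$ in advance and still recover admissible multipliers $\omega^n \in \Omega(v^n)$; outer semicontinuity would not suffice here, since it controls limits of arbitrary feasible pairs rather than guaranteeing feasibility along a prescribed path. Continuity of $\mathcal{B}(\cdot)$ then performs the routine task of preserving the open condition $\mathcal{B}(\omega)v^{m-1}>0$ under small perturbations, and the example preceding the statement confirms that dropping inner semicontinuity genuinely breaks the implication.
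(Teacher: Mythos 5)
Your proof is correct, and it takes a more direct route than the paper's. The paper first builds the marginal function $F(v)=\max_{\omega\in\Omega(v)}\mathcal{B}(\omega)v^{m-1}$, proves that each component $F_i$ is lower semicontinuous (using exactly the two ingredients you use: inner semicontinuity of $\Omega$ to produce a sequence $\omega_n\in\Omega(v_n)$ converging to a near-maximizer, and continuity of $\mathcal{B}(\cdot)$ to pass to the limit), and then perturbs a point $\bar v\in C'$ along $\bar v+te$ and invokes $\liminf_{t\to 0^+}F(\bar v+te)\geq F(\bar v)>0$. You skip the marginal function entirely and track the single witness pair $(v^*,\omega^*)$ along the prescribed sequence $v^n=v^*+\frac{1}{n}e$, transporting the strict inequality by joint continuity of $(\omega,v)\mapsto\mathcal{B}(\omega)v^{m-1}$. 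The two arguments rest on identical mechanics, but yours buys something concrete: it never requires the maximum over $\Omega(v)$ to be attained, whereas the paper's $F$ is only well defined under an implicit compactness (or attainment) assumption on $\Omega(v)$ that is not stated in the theorem's hypotheses. Your version is therefore slightly more general as well as shorter; the paper's version, in exchange, isolates the lower semicontinuity of $F$ as a reusable fact. Your closing remark about why inner (rather than outer) semicontinuity is the right hypothesis, with the preceding example as the counterexample when it fails, is also exactly the right diagnosis.
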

\begin{proof}
Let $F(v)= \max_{\omega \in \Omega(v)} \mathcal{B}(\omega)v^{m-1}$ and $R_i (\mathcal{B})(\omega)$ denote the $i$th row subtensor of $\mathcal{B}(\omega).$ Hence $F_i(v)= \max_{\omega \in \Omega(v)} R_i (\mathcal{B})(\omega)v^{m-1}.$ For a given $v_0 \in \mathbb{R}^n$ then for $\varepsilon > 0$ $\exists \; \omega_0 \in \Omega(v_0)$ such that $R_i (\mathcal{B})(\omega_0)v_0^{m-1} > F_i(v_0) - \varepsilon .$ Since $\Omega(v)$ is inner semicontinuous by Theorem \ref{theorem 1 for semicotinuity} and Theorem \ref{theorem 2 for semicotinuity}, for any sequence $\{v_n\}$ such that $v_n \to v_0, \; \exists \; \{\omega_n\} \subset \Omega(v_n),$ $\omega_n \to \omega_0.$ This implies $F_i(v_n)= \max_{\omega \in \Omega(v_n)} R_i (\mathcal{B})(\omega)v_n^{m-1} \geq R_i (\mathcal{B})(\omega_n)v_n^{m-1}.$ This implies,
    \begin{equation*}
        \liminf_{n \to \infty} F_i(v_n) \geq \lim_{n\to \infty} R_i (\mathcal{B})(\omega_n)v_n^{m-1} = R_i (\mathcal{B})(\omega_0)v_0^{m-1} > F_i(v_0) - \varepsilon.
    \end{equation*}
    Here the equality follows from the continuity of $R_i (\mathcal{B})(\omega)$ which is ensured by the continuity of $\mathcal{B}(\omega).$ Since $\varepsilon >0$ and $\{v_n\}$ is an arbitrary sequence converging to $v_0,$ we obtain
\begin{equation*}
    \liminf_{v\to v_0} F_i(v) \geq F_i(v_0).
\end{equation*}
This implies the lower semicontinuity of $F_i$. Again,

\begin{equation*}
    \liminf_{v\to v_0} F(v)
    = \left( \begin{array}{c}
        \liminf_{v\to v_0} F_1(v) \\
        \vdots \\
        \liminf_{v\to v_0} F_n(v)
             \end{array}  \right)
    \geq \left( \begin{array}{c}
        F_1(v_0) \\
        \vdots \\
        F_n(v_0)
         \end{array}  \right)
    = F(v_0).     
\end{equation*}
Therefore,
\begin{equation*}
    \liminf_{v\to v_0} \max_{\omega \in \Omega(v)} \mathcal{B}(\omega) v^{m-1} \geq \max_{\omega \in \Omega(v_0)} \mathcal{B}(\omega) v_0^{m-1}.
\end{equation*}
Let $C^{\prime}\neq \emptyset.$ If $\Bar{v} \in C^{\prime},$ then
\begin{equation*}
    \Bar{v} \geq 0, \;\;\;\; \mathcal{B}(\Bar{\omega}) \Bar{v}^{m-1} >0, \;\;\;\; \mbox{ for some } \Bar{\omega} \in \Omega(\Bar{v})
\end{equation*}
which is equivalent to
\begin{equation*}
    \Bar{v} \geq 0, \;\;\;\; F(\Bar{v}) >0.
\end{equation*}
Again, $\liminf_{t\to 0^+} F(\Bar{v} + t e) \geq F(\Bar{v}) >0,$ and $\Bar{v} + t e>0$ for $t>0.$ For sufficiently small $t>0$ we have $(\Bar{v} + t e) \in C.$ Hence $C\neq \emptyset.$ 
\end{proof}

Here we propose a necessary condition for the feasibility of SVTCP$(\mathcal{B}(\omega), p(\omega), \Omega).$

\begin{theorem}\label{theorem with feasibility}
Consider the SVTCP$(\mathcal{B}(\omega), p(\omega), \Omega).$ If $\exists \; v\in \mathbb{R}^n$ such that
\begin{equation*}\label{condition for feasibility}
    v\geq 0, \;\;\;\; \mathcal{B}(\omega) v^{m-1} >0 \;\;\;\; \mbox{ for some } \omega \in \cap_{\Bar{N} \in N_{\infty}} \cup_{n\in \Bar{N}} \Omega(nv),
\end{equation*}
and $p(\omega): \mathbb{R}^m \to \mathbb{R}^n$ are bounded below then SVTCP$(\mathcal{B}(\omega), p(\omega), \Omega)$ is feasible.
\end{theorem}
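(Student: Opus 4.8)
The plan is to mimic the classical $S$-tensor argument for the feasibility of TCP$(\mathcal{B},p)$ and adapt it to the set-valued setting. Recall that if $\mathcal{B}w^{m-1}>0$ for some $w\geq 0$, then by positive homogeneity $\mathcal{B}(tw)^{m-1}=t^{m-1}\mathcal{B}w^{m-1}$, which grows componentwise without bound as $t\to\infty$ (since $m\geq 2$) and therefore eventually dominates any fixed vector $p$; hence $tw$ is feasible for large $t$. The only genuinely new difficulty in the set-valued case is that the coefficient tensor and the constant vector both depend on a selection $\omega$, and this selection must remain \emph{admissible} (that is, $\omega\in\Omega(\cdot)$) at the scaled points we use.

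First I would decode the set-theoretic hypothesis. Writing each $\bar{N}\in N_\infty$ as a tail $\{N,N+1,\dots\}$, the membership
\[
\omega\in\bigcap_{\bar{N}\in N_\infty}\ \bigcup_{n\in\bar{N}}\Omega(nv)
\]
says precisely that for every $N$ there is some $n\geq N$ with $\omega\in\Omega(nv)$. Equivalently, there is a strictly increasing sequence $n_1<n_2<\cdots$ with $n_k\to\infty$ such that $\omega\in\Omega(n_k v)$ for every $k$. Thus the fixed selection $\omega$ supplied by the hypothesis stays admissible along the entire ray-subsequence $\{n_k v\}$, which is exactly what the scaling argument needs.

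Next I would carry out the growth estimate at these admissible points. Fix the $v$ and $\omega$ from the hypothesis, so that $v\geq 0$ and $\mathcal{B}(\omega)v^{m-1}>0$ componentwise. By positive homogeneity of degree $m-1$,
\[
\mathcal{B}(\omega)(n_k v)^{m-1}+p(\omega)=n_k^{\,m-1}\,\mathcal{B}(\omega)v^{m-1}+p(\omega).
\]
Since $p(\omega)$ is bounded below, there is a constant $c\geq 0$ with $p(\omega)\geq -c\,e$; combining this with the strict positivity of each component of $\mathcal{B}(\omega)v^{m-1}$ and with $n_k^{\,m-1}\to\infty$, every component of the right-hand side tends to $+\infty$. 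Hence for all sufficiently large $k$ we obtain $\mathcal{B}(\omega)(n_k v)^{m-1}+p(\omega)\geq 0$. For such a $k$ the point $\bar v:=n_k v$ satisfies $\bar v\geq 0$, $\omega\in\Omega(\bar v)$, and $\mathcal{B}(\omega)\bar v^{m-1}+p(\omega)\geq 0$, so $\bar v$ is a feasible point of SVTCP$(\mathcal{B}(\omega),p(\omega),\Omega)$ and feasibility follows.

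I expect the only real obstacle to be the correct translation of the nested intersection-union condition into the statement ``$\omega\in\Omega(n_k v)$ for infinitely many $n_k$''; once the admissibility of $\omega$ along an unbounded sequence of scalings is established, the homogeneity-plus-lower-boundedness estimate is routine. A secondary point to state carefully is why lower-boundedness of $p$ (rather than mere finiteness of $p(\omega)$) is the natural hypothesis: it furnishes a single lower bound $-c\,e$ that the growing term $n_k^{\,m-1}\mathcal{B}(\omega)v^{m-1}$ must overtake, making the threshold on $k$ explicit and the conclusion uniform across the components.
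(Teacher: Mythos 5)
Your proposal is correct and follows essentially the same route as the paper's own proof: both decode the condition $\omega\in\cap_{\Bar{N}\in N_{\infty}}\cup_{n\in\Bar{N}}\Omega(nv)$ into the existence of an unbounded sequence $n_1<n_2<\cdots$ with $\omega\in\Omega(n_k v)$, and both then use homogeneity of degree $m-1$ together with a uniform lower bound $p(\omega)\geq -c\,e$ to show that $n_k^{\,m-1}\mathcal{B}(\omega)v^{m-1}$ eventually dominates $-p(\omega)$, making $n_k v$ feasible. No gaps; the argument matches the paper's in both structure and detail.
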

\begin{proof}
Let the mapping $p(\omega)$ be bounded below. Therefore $\exists \; \gamma \in \mathbb{R},$ such that $p(\omega) \geq \gamma e.$ Suppose $v_0$ and $\omega_0$ satisfy the given assumption. Hence we obtain
\begin{equation*}
    v_0 \geq 0, \;\;\;\; \mathcal{B}(\omega_0) v_0^{m-1} >0, \;\;\;\; \omega_0 \in \cap_{\Bar{N} \in N_{\infty}} \cup_{n\in \Bar{N}} \Omega(n v_0).
\end{equation*}
Then, for any $\Bar{N} \in N_{\infty},$ we have $\omega_0 \in \cup_{n\in \Bar{N}} \Omega (n v_0).$ In particular, we observe the following:

\NI (a) If $\Bar{N}= \{ 1,2,... \},$ then $\exists \; n_1,$ such that $\omega_0 \in \Omega(n_1 v_0);$

\NI (b) If  $\Bar{N}= \{ n_1 +1,... \}$ then $\exists \; n_2$ with $n_2> n_1,$ such that $\omega_0 \in \Omega(n_2 v_0).$

\NI By this process we find a sequence $\{n_l\}$ such that $\omega_0 \in \Omega(n_l v_0)$ and $n_l \to \infty.$ Since $\mathcal{B}(\omega_0) v_0^{m-1} >0,$ $\exists \; \delta > 0,$ such that $\mathcal{B}(\omega_0) v_0^{m-1} > \delta e.$ Taking $l$ large enough to satisfy $n_l > \max\{ -\frac{\gamma}{\delta}, 0 \}$ we write $\delta n_l e > -\gamma e \geq -p(\omega).$ This implies that
\begin{equation*}
    \mathcal{B}(\omega_0) (n_l v_0)^{m-1} > \delta n_l^{m-1} e \geq -p(\omega).
\end{equation*}
Hence,
\begin{equation*}
    n_l v_0 \geq 0, \;\; \mathcal{B}(\omega_0) (n_l v_0)^{m-1} + p(\omega) > 0, \;\; \omega_0 \in \Omega(n_l v_0).
\end{equation*}
This implies $n_l v_0$ is a feasible point of SVTCP$(\mathcal{B}(\omega), p(\omega), \Omega).$
\end{proof}

\begin{defn}
The set of tensors $\{ \mathcal{B}(\omega) : \omega \in \Omega(v) \}$ is said
to be\\
(a) strongly semi-positive if for any nonzero $v\geq 0,$ $\exists \; v_k> 0$ such that
\begin{equation*}\label{strongly semipositive}
    (\mathcal{B}(\omega) v^{m-1})_k \geq 0,\;\; \forall \; \omega \in \Omega(v).
\end{equation*}
(b) weakly semi-positive if for any nonzero such that $v\geq 0,$ $\exists \; v_k> 0$ such that
\begin{equation*}\label{weakly semipositive}
    (\mathcal{B}(\omega) v^{m-1})_k \geq 0,\;\; \mbox{for some} \; \omega \in \Omega(v).
\end{equation*}
\end{defn}

\begin{theorem}
For the SVTCP$(\mathcal{B}(\omega),p(\omega), \Omega),$ the following statements hold:\\
(a) If the set of tensors $\{ \mathcal{B}(\omega) : \omega \in \Omega(v) \}$ is strongly semi-positive, then for any positive mapping $p,$ i.e, $p(\omega) >0$ for all $\omega,$ SVTCP$(\mathcal{B}(\omega),p(\omega), \Omega),$ has zero as its unique solution.\\
(b) If SVTCP$(\mathcal{B}(\omega),p(\omega), \Omega)$ with $p(\omega) >0$  has zero as its unique solution, then the set of tensors $\{ \mathcal{B}(\omega) : \omega \in \Omega(v) \}$ is weakly semi-positive.
\end{theorem}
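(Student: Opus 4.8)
The plan is to prove the two implications separately, in both cases exploiting the fact that for feasible $v\geq 0$ the scalar $v^{T}(\mathcal{B}(\omega)v^{m-1}+p(\omega))$ is a sum of nonnegative terms, so that it vanishes if and only if each term $v_i(\mathcal{B}(\omega)v^{m-1}+p(\omega))_i$ vanishes.

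For (a), I would first record that $v=0$ always solves the problem when $p>0$, since $\mathcal{B}(\omega)\,0^{m-1}=0$ gives $\mathcal{B}(\omega)\,0^{m-1}+p(\omega)=p(\omega)>0$ and the complementarity scalar is trivially zero. For uniqueness, assume for contradiction that some nonzero $v\geq 0$ is a solution. Fixing any $\omega\in\Omega(v)$, feasibility together with $v\geq 0$ makes every term $v_i(\mathcal{B}(\omega)v^{m-1}+p(\omega))_i$ nonnegative, and since they sum to zero each is zero. Strong semi-positivity now supplies an index $k$ with $v_k>0$ and $(\mathcal{B}(\omega)v^{m-1})_k\geq 0$ for all $\omega\in\Omega(v)$; the vanishing of the $k$th term together with $v_k>0$ forces $(\mathcal{B}(\omega)v^{m-1})_k=-p(\omega)_k<0$, a contradiction. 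Hence $v=0$ is the only solution.

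For (b), I would argue by contrapositive, reading the hypothesis as asserting uniqueness of the zero solution for every positive mapping $p$. Suppose the tensor set is not weakly semi-positive. Then there is a nonzero $v\geq 0$ with $(\mathcal{B}(\omega)v^{m-1})_k<0$ for every index $k$ with $v_k>0$ and every $\omega\in\Omega(v)$. The idea is to build a positive $p$ for which this $v$ becomes a nonzero solution. For $\omega\in\Omega(v)$ I would set $p(\omega)_k=-(\mathcal{B}(\omega)v^{m-1})_k$ whenever $v_k>0$, which is positive by hypothesis and makes the corresponding component of $\mathcal{B}(\omega)v^{m-1}+p(\omega)$ exactly zero, and $p(\omega)_j=|(\mathcal{B}(\omega)v^{m-1})_j|+1$ whenever $v_j=0$, which keeps feasibility with a strictly positive value; for $\omega\notin\Omega(v)$ I would take $p(\omega)=e$. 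Then $p>0$ everywhere, $\mathcal{B}(\omega)v^{m-1}+p(\omega)\geq 0$ for all $\omega\in\Omega(v)$, and the complementarity scalar vanishes since each support term is zero and each off-support term carries the factor $v_j=0$. Thus $v$ is a nonzero solution, contradicting the hypothesis.

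I expect the main obstacle to be the construction in (b), which hinges on the precise shape of the negation of weak semi-positivity: only because the strict inequality $(\mathcal{B}(\omega)v^{m-1})_k<0$ holds uniformly over the entire support of $v$ and over all $\omega\in\Omega(v)$ can I define $p$ to be positive while pinning the support components of the feasibility vector to zero. I would also verify that $p$ may be assigned arbitrarily on $\Omega(v)$ without altering the fixed set $\Omega(v)$, so that the produced $v$ genuinely belongs to the solution set of the new instance, and that $\Omega(v)$ is nonempty so that the contradiction in (a) has an $\omega$ to invoke. The term-by-term vanishing argument underlying both parts is routine, and I would not dwell on it.
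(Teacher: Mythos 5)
Your proof is correct and follows essentially the same route as the paper: part (a) by contradiction using the index supplied by strong semi-positivity at the witnessing $\omega$, and part (b) by contrapositive, constructing a positive $p$ that turns the witness of non-weak-semi-positivity into a nonzero solution. Your choice of $p(\omega)_j=|(\mathcal{B}(\omega)v^{m-1})_j|+1$ on the off-support indices is a slight improvement over the paper's $\max\{(\mathcal{B}(\tilde\omega)\tilde v^{m-1})_k,0\}+1$, since yours guarantees feasibility of those components directly.
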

\begin{proof}
(a) We prove this part by showing that if $\exists$ any non-zero solution then the set of tensors $\{ \mathcal{B}(\omega) : \omega \in \Omega(v) \}$ is not strongly semi-positive. Note that, for any positive mapping $p, \; v = 0$ is a solution of SVTCP$(\mathcal{B}(\omega),p(\omega), \Omega).$ We assume that there is another nonzero solution $\Tilde{v},$ i.e, $\exists\; \Tilde{ \omega} \in \Omega(\Tilde{v}),$ such that
\begin{equation}\label{for proof of strong semipositive} 
    \Tilde{v} \geq 0, \;\; \mathcal{B}(\Tilde{\omega})\Tilde{v}^{m-1} + p(\Tilde{\omega}) \geq 0,\;\;   \Tilde{v}^T(\mathcal{B}(\Tilde{\omega})\Tilde{v}^{m-1} + p(\Tilde{\omega})) = 0
\end{equation}
We want to prove that the set of tensors $\{ \mathcal{B}(\omega) : \omega \in \Omega(v) \}$ is not strongly semi-positive. Let the set of tensors $\{ \mathcal{B}(\omega) : \omega \in \Omega(v) \}$ be strongly semi-positive. Then there exists $k\in [n]$ such that
$\Tilde{v}_k >0 $ and $(\mathcal{B}(\Tilde{\omega})\Tilde{v}^{m-1})_k \geq 0,$ and hence $(\mathcal{B}(\Tilde{\omega})\Tilde{v}^{m-1} + p(\Tilde{\omega}))_k > 0,$ which contradicts condition (\ref{for proof of strong semipositive}). Hence the set of tensors $\{ \mathcal{B}(\omega) : \omega \in \Omega(v) \}$ is not strongly semi-positive.\\

\NI (b) We prove this part by showing that if the set of tensors $\{ \mathcal{B}(\omega) : \omega \in \Omega(v) \}$ is not weakly semi-positive then the SVTCP$(\mathcal{B}(\omega),p(\omega), \Omega)$ with $p(\omega) >0$ does not have zero as its unique solution. We assume that $\{ \mathcal{B}(\omega) : \omega \in \Omega(v) \}$ is not weakly semi-positive. Then, there exists a nonzero $\Tilde{v} \geq 0,$ for all $k\in I^+(\Tilde{v}) = \{ i: \Tilde{v}_i > 0, (\mathcal{B}(\omega)\Tilde{v}^{m-1})_k < 0 \}$ for all $\omega \in \Omega (\Tilde{v}).$ Choose $\tilde{\omega} \in \Omega (\Tilde{v}).$ Let $p(\omega)= 1$ for all $\omega \neq \tilde{\omega}$ and 
\begin{equation*}
(p(\tilde{\omega}))_k =\left\{ \begin{array}{ll}
    -(\mathcal{B}(\tilde{\omega})\Tilde{v}^{m-1})_k, & k\in I^+(\Tilde{v}); \\
    \max\{ (\mathcal{B}(\tilde{\omega})\Tilde{v}^{m-1})_k, 0 \} +1, &  \mbox{ otherwise.}
\end{array} \right.
\end{equation*}
Therefore $p(\omega) >0$ for all $\omega.$ According to the above construction, we have
\begin{equation*}
    \Tilde{v} \geq 0, \;\; \mathcal{B}(\Tilde{\omega})\Tilde{v}^{m-1} + p(\Tilde{\omega}) \geq 0,\;\;   \Tilde{v}^T(\mathcal{B}(\Tilde{\omega})\Tilde{v}^{m-1} + p(\Tilde{\omega})) = 0, \mbox{ with } \Tilde{\omega} \in \Omega (\Tilde{v}).
\end{equation*}
This implies the nonzero vector $\Tilde{v}$ is a solution of SVTCP$(\mathcal{B}(\omega),p(\omega), \Omega).$
\end{proof}

\begin{theorem}
    If the set of tensor $\{\mathcal{B}(\omega): \omega\in \Omega(v)\}$ is strongly semi-positive and SOL$\{\mathcal{B}(\omega), 0\} = \{0\}, \forall \; \omega \in \Omega,$ then SOL$(\mathcal{B}(\omega), p(\omega), \Omega)$ is nonempty.
\end{theorem}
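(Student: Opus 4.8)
The plan is to establish nonemptiness by a Karamardian-type continuation, using the two facts already available as the endpoints of a homotopy. First I would fix a strictly positive mapping, say $\hat p(\omega)\equiv e$, and interpolate the data by $p_t(\omega)=(1-t)\hat p(\omega)+t\,p(\omega)$ for $t\in[0,1]$. At $t=0$ the mapping is positive, so strong semi-positivity together with part (a) of the preceding theorem forces SVTCP$(\mathcal{B}(\omega),\hat p(\omega),\Omega)$ to have $v=0$ as its unique solution; this is the base case. The objective is to propagate solvability along $t$ up to $t=1$, which is the target problem.

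The decisive ingredient is an a priori bound: the union over $t\in[0,1]$ of the solution sets of SVTCP$(\mathcal{B}(\omega),p_t(\omega),\Omega)$ should lie in a fixed ball. This is exactly where the hypothesis SOL$(\mathcal{B}(\omega),0)=\{0\}$ (each $\mathcal{B}(\omega)$ an $R_0$-tensor) enters, through Theorem~\ref{boundedness theorem for R_0-tensor}. I would argue by contradiction: suppose solutions $v_k$, with multipliers $\omega_k\in\Omega(v_k)$, satisfy $\|v_k\|_2\to\infty$, and set $u_k=v_k/\|v_k\|_2$, passing to a subsequence with $u_k\to u^*$, $\|u^*\|_2=1$, $u^*\ge0$. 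Dividing $\mathcal{B}(\omega_k)v_k^{m-1}+p_{t_k}(\omega_k)\ge0$ by $\|v_k\|_2^{m-1}$ and the complementarity identity by $\|v_k\|_2^{m}$, the order-zero terms $p_{t_k}(\omega_k)$ (bounded once the multipliers lie in a compact set) vanish in the limit, leaving $\mathcal{B}(\omega^*)(u^*)^{m-1}\ge0$ and $(u^*)^{T}\mathcal{B}(\omega^*)(u^*)^{m-1}=0$ for a limiting multiplier $\omega^*$. Since the $R_0$ hypothesis is posited for \emph{every} $\omega$ in the range of $\Omega$, the nonzero $u^*$ then lies in SOL$(\mathcal{B}(\omega^*),0)$, contradicting SOL$(\mathcal{B}(\omega^*),0)=\{0\}$.

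With all solutions confined to a fixed ball $B$ on whose boundary the natural map $v\mapsto v-[\,v-(\mathcal{B}(\omega)v^{m-1}+p_t(\omega))\,]_+$ (here $[\,\cdot\,]_+$ is the projection onto $\mathbb{R}^n_+$ and $\omega$ ranges over $\Omega(v)$) has no zero for any $t$, I would invoke homotopy invariance of the topological degree on $B$: the degree is independent of $t$. At $t=0$ the sole zero is the interior point $v=0$, and the standard Karamardian index computation for a positive right-hand side together with semi-positivity forces this degree to be nonzero. Hence the degree at $t=1$ is nonzero, producing a zero of the $t=1$ map, i.e. a solution of SVTCP$(\mathcal{B}(\omega),p(\omega),\Omega)$.

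The step I expect to be hardest is the set-valued passage to the limit in the boundedness argument, and, correspondingly, making the degree well defined for the set-valued natural map. Extracting a convergent $\omega_k\to\omega^*$ and guaranteeing $\mathcal{B}(\omega_k)u_k^{m-1}\to\mathcal{B}(\omega^*)(u^*)^{m-1}$ requires the multipliers to stay in a compact set and requires $\omega\mapsto\mathcal{B}(\omega)$ (and $p(\omega)$) to be continuous; the degree step in turn needs the associated set-valued map to be outer semicontinuous with adequate values, i.e. the standing closed-valuedness of $\Omega$ strengthened to a closed-graph property. I would therefore isolate these regularity requirements at the outset, and, should the stated hypotheses not supply them, replace the degree computation by a Kakutani fixed-point formulation of the same continuation, whose fixed points are exactly the points satisfying $v=[\,v-(\mathcal{B}(\omega)v^{m-1}+p(\omega))\,]_+$ for some $\omega\in\Omega(v)$.
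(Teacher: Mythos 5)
Your strategy is genuinely different from the paper's, and considerably heavier. The paper never runs a homotopy on the set-valued problem: it works one $\omega$ at a time. For each fixed $\omega$, strong semi-positivity (via part (a) of the preceding theorem) gives that TCP$(\mathcal{B}(\omega),d)$ has only the zero solution for some $d>0$, and the hypothesis SOL$(\mathcal{B}(\omega),0)=\{0\}$ gives the same for $d=0$; the paper then simply cites Corollary 3.2(i) of \cite{gowda2015z} --- a Karamardian-type result for tensors stating that if TCP$(\mathcal{A},0)$ and TCP$(\mathcal{A},d)$ for some $d>0$ both have zero as their unique solution then $\mathcal{A}$ is a $Q$-tensor --- to conclude that each $\mathcal{B}(\omega)$ is a $Q$-tensor, hence each fixed-$\omega$ problem TCP$(\mathcal{B}(\omega),p(\omega))$ is solvable. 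What you propose is, in effect, a re-proof of that imported Karamardian theorem by hand, but lifted to the level of the set-valued problem rather than carried out for a fixed $\omega$.

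That lift is where the genuine gap lies. The continuation you describe uses the topological degree of the natural map $v\mapsto v-[v-(\mathcal{B}(\omega)v^{m-1}+p_t(\omega))]_+$ with $\omega$ ranging over $\Omega(v)$, which is a set-valued map; a degree (or a Kakutani substitute) for it requires a closed graph for $\Omega$, compact convex (or at least acyclic) values of the resulting map, compactness of the multiplier set, and continuity of $\omega\mapsto\mathcal{B}(\omega)$ and $\omega\mapsto p(\omega)$. None of these is hypothesized in the theorem --- the paper assumes only closed-valuedness of $\Omega$ and reserves compactness/continuity assumptions for the later merit-function results. The same regularity is needed in your a priori bound, where you must extract $\omega_k\to\omega^*$ and pass to the limit in $\mathcal{B}(\omega_k)u_k^{m-1}$; without a compact multiplier set there is no limiting $\omega^*$ to which the $R_0$ hypothesis can be applied. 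You flag these requirements yourself, but flagging them does not close the argument: as stated, the degree is undefined and the blow-up argument does not terminate in a contradiction. The repair is to run your entire continuation for a single fixed $\omega$, where the natural map is single-valued and continuous, the base case $p_0=e$ gives local degree one at the origin, and the $R_0$ condition (cf.\ Theorem \ref{boundedness theorem for R_0-tensor}) supplies the a priori bound; that yields precisely the $Q$-tensor conclusion the paper obtains by citation, at the cost of making your argument a proof of the cited lemma rather than of the set-valued statement directly.
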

\begin{proof}
    Since set of tensor $\{\mathcal{B}(\omega): \omega\in \Omega(v)\}$ is strongly semipositive, $\forall\; p(\omega)>0$ SVTCP$(\mathcal{B}(\omega),p(\omega), \Omega)$ has zero as its unique solution. Also SOL$\{\mathcal{B}(\omega), 0, \Omega\} = \{0\}.$ Thus for every $\omega \in \Omega, \; \exists \; d>0$ such that for TCP$(\mathcal{B}(\omega), 0)$ and TCP$(\mathcal{B}(\omega), d)$ zero is the only solution. Therefore by Corollary 3.2, Item (i) of \cite{gowda2015z} we conclude that $\mathcal{B}(\omega)$ is a $Q$-tensor for each $\omega \in \Omega (v).$ This implies SOL$(\mathcal{B}(\omega), p(\omega),\Omega)$ is nonempty. 
\end{proof}

\begin{theorem}
    For the SVTCP$(\mathcal{B}(\omega), p(\omega), \Omega)$ if $\exists \; \omega_0 \in \Omega$ such that $\mathcal{B}(\omega_0)$ is a $P$-tensor with $\Omega^{-1} (\omega_0) \subseteq$SOL$(\mathcal{B}(\omega_0), p(\omega_0)),$ then SOL$(\mathcal{B}(\omega), p(\omega), \Omega)$ is nonempty.
\end{theorem}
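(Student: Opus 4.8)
The plan is to produce an explicit member of SOL$(\mathcal{B}(\omega),p(\omega),\Omega)$ attached to the distinguished index $\omega_0$, so that no optimization or fixed-point machinery is needed. First I would unwind the definition (\ref{SICP}): a vector $\bar v$ lies in SOL$(\mathcal{B}(\omega),p(\omega),\Omega)$ exactly when there is \emph{one} admissible parameter $\omega\in\Omega(\bar v)$ with $\bar v\ge 0$, $\mathcal{B}(\omega)\bar v^{m-1}+p(\omega)\ge 0$ and $\bar v^{T}(\mathcal{B}(\omega)\bar v^{m-1}+p(\omega))=0$; equivalently, $\bar v$ solves the ordinary TCP$(\mathcal{B}(\omega),p(\omega))$ for that single $\omega$. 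Hence the task reduces to locating a point at which $\omega_0$ is both admissible and solves TCP$(\mathcal{B}(\omega_0),p(\omega_0))$.

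Next I would establish that $\Omega^{-1}(\omega_0)\neq\emptyset$. Reading $\Omega^{-1}(\omega_0)=\{v:\omega_0\in\Omega(v)\}$ as in the inverse-image convention recalled in the preliminaries, the hypothesis $\omega_0\in\Omega$ says that $\omega_0$ is attained in the range $\bigcup_v\Omega(v)$ of the set-valued map, so there exists $\bar v$ with $\omega_0\in\Omega(\bar v)$, i.e. $\bar v\in\Omega^{-1}(\omega_0)$. The standing inclusion $\Omega^{-1}(\omega_0)\subseteq$ SOL$(\mathcal{B}(\omega_0),p(\omega_0))$ then forces $\bar v\in$ SOL$(\mathcal{B}(\omega_0),p(\omega_0))$, that is $\bar v\ge 0$, $\mathcal{B}(\omega_0)\bar v^{m-1}+p(\omega_0)\ge 0$ and $\bar v^{T}(\mathcal{B}(\omega_0)\bar v^{m-1}+p(\omega_0))=0$. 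Since moreover $\omega_0\in\Omega(\bar v)$, the pair $(\bar v,\omega_0)$ satisfies every requirement of (\ref{SICP}), whence $\bar v\in$ SOL$(\mathcal{B}(\omega),p(\omega),\Omega)$ and the set is nonempty.

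The $P$-tensor hypothesis on $\mathcal{B}(\omega_0)$ is what makes the inclusion substantive: through the known solvability theory for $P$-tensors (a $P$-tensor is a $Q$-tensor), TCP$(\mathcal{B}(\omega_0),p(\omega_0))$ is solvable, so SOL$(\mathcal{B}(\omega_0),p(\omega_0))$ is nonempty and the inclusion is not vacuous. If one instead uses the stronger fact that a $P$-tensor yields a unique solution, the inclusion pins $\Omega^{-1}(\omega_0)$ down to that single solution, and the selection $\bar v$ above is then uniquely determined. Either way the conclusion is the same one-line selection from $\Omega^{-1}(\omega_0)$.

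The hard part will be interpretive rather than computational. The first thing to pin down is the meaning of ``solution,'' since the problem (\ref{SICP}) uses the existential form $\omega\in\Omega(v)$ while the displayed definition of SOL$(\mathcal{B}(\omega),p(\omega),\Omega)$ is written with ``$\forall\,\omega\in\Omega$''; the earlier proofs in the paper (for instance the construction used for the weakly semi-positive case) only make sense under the existential reading, which is the one I would adopt. The second point to secure is that $\omega_0\in\Omega$ genuinely forces $\Omega^{-1}(\omega_0)\neq\emptyset$, i.e. that $\Omega$ denotes the range of the set-valued map rather than an ambient set possibly larger than $\bigcup_v\Omega(v)$. Once these two conventions are fixed, the proof is immediate.
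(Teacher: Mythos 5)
Your argument is correct and is in substance the same as the paper's. The paper obtains the result by invoking Theorem 17 of \cite{zhou2013set}, which writes $\mathrm{SOL}(\mathcal{B}(\omega),p(\omega),\Omega)=\bigcup_{\omega}[\mathrm{SOL}(\mathcal{B}(\omega),p(\omega))\cap\Omega^{-1}(\omega)]$, and then observes that the $\omega_0$-term of this union is nonempty; your direct selection of $\bar v\in\Omega^{-1}(\omega_0)$ and verification that the pair $(\bar v,\omega_0)$ satisfies all the conditions of the problem is precisely the relevant inclusion of that decomposition, proved inline, so the existential reading of the solution set that you adopt is indeed the one the paper uses. The one point where you are more careful than the paper is the requirement $\Omega^{-1}(\omega_0)\neq\emptyset$: the paper infers nonemptiness of $\mathrm{SOL}(\mathcal{B}(\omega_0),p(\omega_0))\cap\Omega^{-1}(\omega_0)$ from the inclusion $\Omega^{-1}(\omega_0)\subseteq\mathrm{SOL}(\mathcal{B}(\omega_0),p(\omega_0))$ together with nonemptiness of the right-hand side, which by itself proves nothing unless $\Omega^{-1}(\omega_0)$ is nonempty; you correctly isolate this as the step where $\omega_0\in\Omega$ must be read as saying that $\omega_0$ lies in the range of the set-valued map. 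Your observation that, once the inclusion is granted, the $P$-tensor hypothesis serves only to make $\mathrm{SOL}(\mathcal{B}(\omega_0),p(\omega_0))$ nonempty (and compact) rather than to drive the existence argument also matches its role in the paper; the only caveat is your parenthetical suggestion that a $P$-tensor gives a \emph{unique} TCP solution, which, unlike the matrix case, is not true in general for tensors and is not needed here.
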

\begin{proof}
    Consider the SVTCP$(\mathcal{B}(\omega), p(\omega), \Omega).$ Suppose $\omega_0\in \Omega$ is such that $\mathcal{B}(\omega_0)$ is a $P$-tensor then SOL$(\mathcal{B}(\omega_0), p(\omega_0))$ is nonempty and compact. Also suppose that $\Omega^{-1}(\omega_0) \subseteq $ SOL$(\mathcal{B}(\omega_0), p(\omega_0)).$ Now from the Theorem 17 of \cite{zhou2013set}, we know
    \begin{equation*}
        \mbox{SOL}(\mathcal{B}(\omega), p(\omega), \Omega) = \bigcup_{\omega \in \mathbb{R}^m} [\mbox{SOL}(\mathcal{B}(\omega), p(\omega)) \cap  \Omega^{-1}(\omega)].
    \end{equation*}
    Since $\Omega^{-1}(\omega_0) \subseteq $ SOL$(\mathcal{B}(\omega_0), p(\omega_0))$ and SOL$(\mathcal{B}(\omega_0), p(\omega_0))$ is nonempty, we have 
    \[\bigcup_{\omega \in \mathbb{R}^m} [\mbox{SOL}(\mathcal{B}(\omega), p(\omega)) \cap  \Omega^{-1}(\omega)] \neq \phi.\]
\end{proof}

\NI Before starting the next result, set-valued tensor complementarity problem is to transfer to a system of equations or an unconstrained optimization via merit function. We characterise the solution set of set-valued tensor complementarity problem by establishing the necessary and sufficient conditions for the level boundedness of the merit function.

\NI A function $f: \mathbb{R}^n \to \mathbb{R}$ is said to be a merit function or residual function for a complementarity problem if\\
\NI (a) $f(v) \geq 0\; \forall \; v\in \mathbb{R}^n.$\\
\NI (b) $f(v) = 0$ if and only if $x$ is a solution of the complementarity problem.

\NI For SVTCP$(\mathcal{B}(\omega), p(\omega), \Omega)$ we define the merit function as 
$$r(v) = \min_{\omega \in \Omega(v)} \| \min \{v, \mathcal{B}(\omega)v^{m-1} + p(\omega)\}\|.$$

\NI We define the limit-$R_0$ property of a set of tensors. This definition extends the definition of $R_0$-tensor in the case of set-valued tensor complementarity problem.

\begin{defn}
The set of tensors $\{ \mathcal{B}(\omega): \omega \in \Omega(v) \}$ corresponding to the SVTCP $(\mathcal{B}(\omega),p(\omega), \Omega)$ is said to have the limit-$R_0$ property if
\begin{equation*}
    v\geq 0,\; \; \mathcal{B}(\omega)v^{m-1} \geq 0,\;\; \mathcal{B}(\omega)v^m =0 \mbox{ for some } \omega \in \limsup_{x \rightarrow \infty} \Omega(v) \implies v=0.
\end{equation*}
\end{defn}

\NI We show that the limit-$R_0$ property provides a necessary condition for a merit function to be level bounded.

\begin{theorem}
Consider the SVTCP$(\mathcal{B}(\omega),p(\omega), \Omega).$ Suppose $\exists$ a bounded set $\Omega,$ such that $\Omega(v) \subset \Omega \; \; \forall\; v \in \mathbb{R}^n$ and $\mathcal{B}(\omega)$ and $p(\omega)$ are continuous on $\Omega.$ If the set of tensors $\{ \mathcal{B}(\omega): \omega \in \Omega(v) \}$ has the limit-$R_0$ property, then the merit function, $r(v) = \min_{\omega \in \Omega(v)} \| \min \{v, \mathcal{B}(\omega)v^{m-1} + p(\omega)\}\|$ is level bounded.
\end{theorem}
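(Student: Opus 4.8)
The plan is to argue by contradiction. Suppose $r$ is not level bounded; then there exist $\alpha \geq 0$ and a sequence $\{v^k\}$ with $\|v^k\| \to \infty$ and $r(v^k) \leq \alpha$ for all $k$. Since $\Omega(v^k)$ is closed by our standing assumption and is contained in the bounded set $\Omega$, it is compact; because $\mathcal{B}(\omega)$ and $p(\omega)$ are continuous on $\Omega$, the map $\omega \mapsto \|\min\{v^k, \mathcal{B}(\omega)(v^k)^{m-1} + p(\omega)\}\|$ is continuous, so the minimum defining $r(v^k)$ is attained at some $\omega^k \in \Omega(v^k)$. Writing $w^k = \mathcal{B}(\omega^k)(v^k)^{m-1} + p(\omega^k)$, we then have $\|\min\{v^k, w^k\}\| \leq \alpha$.

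Next I would pass to the limit. Set $u^k = v^k/\|v^k\|$; since $\|u^k\| = 1$ and $\omega^k \in \Omega$ with $\Omega$ bounded, I can extract a subsequence along which $u^k \to u$ with $\|u\| = 1$ and $\omega^k \to \omega$. Because $v^k \to \infty$ and $\omega^k \to \omega$ with $\omega^k \in \Omega(v^k)$, the definition of the outer limit gives $\omega \in \limsup_{v \to \infty} \Omega(v)$. From $\|\min\{v^k, w^k\}\| \leq \alpha$ the componentwise bound $\min\{v^k_i, w^k_i\} \geq -\alpha$ yields both $v^k_i \geq -\alpha$ and $w^k_i \geq -\alpha$ for every $i$. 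Dividing $v^k_i \geq -\alpha$ by $\|v^k\| \to \infty$ gives $u \geq 0$. Exploiting the homogeneity $\mathcal{B}(\omega^k)(v^k)^{m-1} = \|v^k\|^{m-1}\mathcal{B}(\omega^k)(u^k)^{m-1}$, dividing $w^k_i \geq -\alpha$ by $\|v^k\|^{m-1}$, and using continuity of $\mathcal{B}$ and $p$ together with boundedness of $p$ on $\Omega$ (so that $p(\omega^k)_i/\|v^k\|^{m-1} \to 0$), I obtain $\mathcal{B}(\omega)u^{m-1} \geq 0$ in the limit.

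The heart of the argument, and the step I expect to be the main obstacle, is establishing the complementarity $\mathcal{B}(\omega)u^m = 0$. I would analyze each coordinate separately. If $u_i = 0$, the term $u_i(\mathcal{B}(\omega)u^{m-1})_i$ vanishes trivially. If $u_i > 0$, then $v^k_i = \|v^k\|\, u^k_i \to \infty$, so eventually $v^k_i > \alpha$; combined with $\min\{v^k_i, w^k_i\} \leq \alpha$ this forces $w^k_i \leq \alpha$, and with $w^k_i \geq -\alpha$ the sequence $\{w^k_i\}$ is bounded. Since $w^k_i = \|v^k\|^{m-1}(\mathcal{B}(\omega^k)(u^k)^{m-1})_i + p(\omega^k)_i$ with $p(\omega^k)_i$ bounded and $\|v^k\|^{m-1} \to \infty$, it follows that $(\mathcal{B}(\omega^k)(u^k)^{m-1})_i \to 0$, hence $(\mathcal{B}(\omega)u^{m-1})_i = 0$ and again $u_i(\mathcal{B}(\omega)u^{m-1})_i = 0$. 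Summing over $i$ gives $\mathcal{B}(\omega)u^m = \sum_i u_i(\mathcal{B}(\omega)u^{m-1})_i = 0$.

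Having assembled $u \geq 0$, $\mathcal{B}(\omega)u^{m-1} \geq 0$, and $\mathcal{B}(\omega)u^m = 0$ with $\omega \in \limsup_{v \to \infty} \Omega(v)$, I would invoke the limit-$R_0$ property to conclude $u = 0$, contradicting $\|u\| = 1$. This contradiction shows that every level set of $r$ is bounded, i.e., $r$ is level bounded. The delicate points to get right are the attainment of the minimum (requiring compactness of $\Omega(v^k)$ and continuity of the objective in $\omega$) and the coordinatewise bookkeeping of the homogeneity factor $\|v^k\|^{m-1}$ against the bounded perturbation $p(\omega^k)$.
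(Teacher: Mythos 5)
Your proof is correct and follows essentially the same contradiction argument as the paper's: extract minimizers $\omega^k$ from the compact sets $\Omega(v^k)$, normalize, pass to a subsequential limit $(u,\omega)$ with $\omega\in\limsup_{v\to\infty}\Omega(v)$, and invoke the limit-$R_0$ property. In fact your write-up is tighter than the paper's at two points: the paper rescales $v_n$ by $\|v_n\|^{m-1}$ (which for $m>2$ would force the limit point $v_0$ to be zero, undercutting its closing claim that $v_0$ is nonzero), whereas your normalization by $\|v^k\|$ guarantees $\|u\|=1$; and the paper only asserts $\|\min\{v_0,\mathcal{B}(\omega_0)v_0^{m-1}\}\|=0$ and declares a contradiction, whereas your coordinatewise analysis explicitly derives the complementarity condition $\mathcal{B}(\omega)u^{m}=0$ needed to apply the limit-$R_0$ definition.
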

\begin{proof}
We establish the result by showing that if $r(v)$ is not level bounded for the SVTCP$(\mathcal{B}(\omega),p(\omega), \Omega)$ where $\exists$ a bounded set $\Omega,$ such that $\Omega(v) \subset \Omega \; \; \forall\; v\in \mathbb{R}^n$ and $\mathcal{B}(\omega)$ and $p(\omega)$ are continuous on $\Omega$ then the set of tensors $\{ \mathcal{B}(\omega): \omega \in \Omega(v) \}$ does not have the limit-$R_0$ property. By this approach we first assume that $r(v)$ is not level bounded. Then $\exists$ a sequence $\{v_n\},$ such that $\|v_n\| \to \infty$ as $n \to \infty$ and $\{r(v_n)\}$ is bounded. Then,
\begin{align}\label{equation for weak error bound  1st part}
    \frac{r(v_n)}{\|v_n\|^{m-1}} & = \min_{\omega \in \Omega(v_n)} \left\| \min \left\{ \frac{v_n}{\|v_n\|^{m-1}}, \frac{\mathcal{B}(\omega)v^{m-1} + p(\omega)}{\|v_n\|^{m-1}} \right\} \right\| \notag \\ 
    & = \left\| \min \left\{ \frac{v_n}{\|v_n\|^{m-1}}, \frac{\mathcal{B}(\omega_n)v^{m-1} + p(\omega_n)}{\|v_n\|^{m-1}} \right\} \right\|,
\end{align}
the minimizer is attained at $\omega_n \in \Omega (v_n),$ whose  existence is ensured by the compactness of $\Omega(v_n),$ since $\Omega(v)$ is closed and $\Omega$ is bounded. Taking a subsequence if necessary, we can assume that $\frac{v_n}{\|v_n\|^{m-1}}$ converges to $v_0$ and $\{\omega_n\}$ converges to $\omega_0.$ We have,
\begin{equation*}
    \omega_0 \in \limsup_{n\to \infty} \Omega(v_n) \subseteq \limsup_{ \|v\| \to \infty} \Omega(v).
\end{equation*}
Since $p(\omega)$ is continuous and $\omega_n \in \Omega$ is bounded we have $\lim_{n\to \infty} \frac{p(\omega_n)}{\|v_n\|^{m-1}}=0.$

Taking limit on both sides of the equation (\ref{equation for weak error bound  1st part}) as $n\to \infty$ we obtain,
\begin{equation*}
  \left\| \min\left\{v_0, \mathcal{B}(\omega_0)v_0^{m-1} \right\} \right\| = 0 .
\end{equation*}
Since $v_0$ is a nonzero vector, this implies that the set of tensors $\{ \mathcal{B}(\omega): \omega \in \Omega(v) \}$ does not have limit-$R_0$ property.
\end{proof}

\begin{theorem}
For SVTCP$(\mathcal{B}(\omega),p(\omega), \Omega),$ suppose $\exists$ a compact set $\Omega,$ such that $\Omega(v) \subset \Omega, \; \forall$ $v\in \mathbb{R}^n,$ where $p(\omega)$ and $ \mathcal{B}(\omega)$ are continuous on $\Omega.$ If the merit function $r(v) = \min_{\omega \in \Omega(v)} \| \min(v, \mathcal{B}(\omega)v^{m-1} + p(\omega))\|$ is level bounded, then
\begin{equation*}\label{last theorem}
    v\geq 0,\; \; \mathcal{B}(\omega)v^{m-1} \geq 0,\;\; \mathcal{B}(\omega)v^m =0 \mbox{    for some } \omega \in \cap_{\Bar{N} \in N_{\infty}} \cup_{n\in \Bar{N}} \Omega(nv) \implies v= 0.
\end{equation*}
\end{theorem}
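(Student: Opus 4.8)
The plan is to prove the contrapositive: from the failure of the displayed implication I will construct a sequence of points whose norms tend to infinity but along which the merit function $r$ stays bounded, so that no level set $\{v : r(v) \le \alpha\}$ can be bounded. Concretely, suppose there is a nonzero $v \ge 0$ and an $\omega_0 \in \cap_{\bar{N} \in N_\infty} \cup_{n \in \bar{N}} \Omega(nv)$ with $\mathcal{B}(\omega_0)v^{m-1} \ge 0$ and $\mathcal{B}(\omega_0)v^m = 0$.

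First I would turn the membership $\omega_0 \in \cap_{\bar{N} \in N_\infty} \cup_{n \in \bar{N}} \Omega(nv)$ into a sequence, by the same tail-selection argument used in the proof of Theorem~\ref{theorem with feasibility}: applying it to $\bar{N} = \{1,2,\dots\}$, then to $\{n_1+1,n_1+2,\dots\}$, and so on, produces a strictly increasing $\{n_l\}$ with $n_l \to \infty$ and $\omega_0 \in \Omega(n_l v)$ for every $l$. Setting $v_l := n_l v$ gives $\|v_l\| = n_l\|v\| \to \infty$ and $\omega_0 \in \Omega(v_l)$, so the definition of $r$ yields $r(v_l) \le \|\min\{v_l, \mathcal{B}(\omega_0)v_l^{m-1} + p(\omega_0)\}\|$.

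The crux is an elementary complementarity identity. Since $v \ge 0$ and $\mathcal{B}(\omega_0)v^{m-1} \ge 0$, the scalar $\mathcal{B}(\omega_0)v^m = \sum_i v_i(\mathcal{B}(\omega_0)v^{m-1})_i$ is a sum of nonnegative terms, and its vanishing forces $v_i(\mathcal{B}(\omega_0)v^{m-1})_i = 0$ for each $i$; in particular $v_i > 0 \Rightarrow (\mathcal{B}(\omega_0)v^{m-1})_i = 0$. Using homogeneity, $\mathcal{B}(\omega_0)v_l^{m-1} = n_l^{m-1}\mathcal{B}(\omega_0)v^{m-1}$, I would then examine the inner vector coordinate by coordinate. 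On coordinates with $v_i > 0$ the tensor term is annihilated by complementarity, so the entry equals $\min\{n_l v_i, p_i(\omega_0)\} = p_i(\omega_0)$ for $l$ large; on coordinates with $v_i = 0$ the entry equals $\min\{0, n_l^{m-1}(\mathcal{B}(\omega_0)v^{m-1})_i + p_i(\omega_0)\}$, which is bounded in $l$ in either case (eventually $0$ if the tensor term is positive, and a constant if it vanishes). Hence every coordinate stays bounded, so $\{r(v_l)\}$ is bounded while $\|v_l\| \to \infty$, contradicting level boundedness.

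The one genuinely delicate point is this coordinate analysis: one must recognise that the relation $\mathcal{B}(\omega_0)v^m = 0$ is exactly what prevents both arguments of the inner minimum from diverging simultaneously --- where $v_i > 0$ the first argument grows like $n_l$ but complementarity removes the growing tensor contribution from the second, leaving only the fixed $p_i(\omega_0)$, while where $v_i = 0$ the first argument is pinned at $0$. I would also note that in this direction the compactness of $\Omega$ and the continuity of $\mathcal{B}(\cdot)$ and $p(\cdot)$ are not actually needed, since $\omega_0$ is held fixed throughout; they are retained only to parallel the hypotheses of the preceding theorem.
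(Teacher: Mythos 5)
Your proof is correct and follows essentially the same route as the paper's: argue the contrapositive, extract a sequence $n_l \to \infty$ with $\omega_0 \in \Omega(n_l v)$ by the tail-selection argument, and run a coordinate-wise case analysis using the complementarity relation $v_i\,(\mathcal{B}(\omega_0)v^{m-1})_i = 0$ (forced by $\mathcal{B}(\omega_0)v^m=0$ with nonnegative summands) to bound $r(n_l v)$ by $\sum_i \|p_i(\omega_0)\|$. Your side remark that compactness of $\Omega$ and continuity of $p$ are not actually needed here is a valid minor sharpening: the paper invokes them only to bound $\max_{\omega\in\Omega} p_i(\omega)$ in Case~1, where the fixed finite value $p_i(\omega_0)$ already suffices.
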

\begin{proof}
We prove this by showing that if there exists $v_0 \neq 0$ and $\omega_0 \in \cap_{\Bar{N} \in N_{\infty}} \cup_{n\in \Bar{N}} \Omega(nv_0)$ for the SVTCP$(\mathcal{B}(\omega),p(\omega), \Omega)$ with $p(\omega)$ and $ \mathcal{B}(\omega),$ continuous on the compact set $\Omega$ and $\Omega(v) \subset \Omega, \; \forall$ $v\in \mathbb{R}^n$ such that  $v_0\geq 0,\; \; \mathcal{B}(\omega_0)v_0^{m-1} \geq 0,\;\; \mathcal{B}(\omega_0)v_0^m =0$
then the merit function $r(v) = \min_{\omega \in \Omega(v)} \| \min(v, \mathcal{B}(\omega)v^{m-1} + p(\omega))\|$ is not level bounded. Suppose $\exists \; v_0 \neq 0$ and $\omega_0 \in \cap_{\Bar{N} \in N_{\infty}} \cup_{n\in \Bar{N}} \Omega(nv_0)$ such that
\begin{equation}\label{last proof 1}
     v_0\geq 0,\; \; \mathcal{B}(\omega_0)v_0^{m-1} \geq 0,\;\; \mathcal{B}(\omega_0)v_0^m =0
\end{equation}
Then for any $\Bar{N} \in N_{\infty},$ we have $\omega_0 \in \cup_{n\in \Bar{N}} \Omega (n v_0).$ In particular, we observe the following:

\NI (a) If $\Bar{N}= \{ 1,2,... \},$ then $\exists \; n_1,$ such that $\omega_0 \in \Omega(n_1 v_0);$

\NI (b) If  $\Bar{N}= \{ n_1 +1,... \}$ then $\exists \; n_2$ with $n_2> n_1,$ such that $\omega_0 \in \Omega(n_2 v_0).$

\NI By this process we find a sequence $\{n_l\}$ such that $\omega_0 \in \Omega(n_l v_0)$ and $n_l \to \infty.$
 Therefore,
\begin{align}\label{last theorem 1} \notag
    r(n_l v_0) & = \min_{\omega \in \Omega(n_l v_0)} \| \min \{n_l v_0,\; n_l^{m-1}\mathcal{B}(\omega)v_0^{m-1} + p(\omega)\}\|\\ \notag
          & \leq \| \min \{n_l v_0,\; n_l^{m-1}\mathcal{B}(\omega_0)v_0^{m-1} + p(\omega_0)\}\| \\ 
          & \leq \sum_{i=1}^n \| \min \{n_l (v_0)_i,\; n_l^{m-1}(\mathcal{B}(\omega_0)v_0^{m-1})_i + p_i(\omega_0)\}\| 
\end{align}
Now we consider the following cases.

\NI Case-1: We consider the case when $(v_0)_i>0.$ Then using (\ref{last proof 1}) we conclude $(\mathcal{B}(\omega_0)$ $v_0^{m-1})_i = 0.$ Imposing the continuity of the function $p(\omega)$ and the compactness of the set $\Omega$ we conclude that $\max_{\omega \in \Omega}p(\omega)$ is bounded. This implies that $\exists$ sufficiently large $k$ such that $\max_{\omega \in \Omega}p_i(\omega) < n_l (v_0)_i.$ Hence,
\begin{equation}\label{last theorem 2}
    \| \min \{n_l (v_0)_i, n_l^{m-1} (\mathcal{B}(\omega_0)v_0^{m-1})_i + p_i(\omega_0)\}\| = \|p_i(\omega_0) \|
\end{equation}

\NI Case-2: We consider the case when $(v_0)_i=0.$ Here we consider the following two subcases.

\NI Subcase-1: When $ n_l^{m-1} (\mathcal{B}(\omega_0)v_0^{m-1})_i + p_i(\omega_0) \geq 0,$ we have
\begin{equation*}\label{subcase 1}
    \|\min\{n_l (v_0)_i, n_l^{m-1}(\mathcal{B}(\omega_0)v_0^{m-1})_i + p_i(\omega_0)\}\| = 0
\end{equation*}

\NI Subcase-2: When $n_l^{m-1} (\mathcal{B}(\omega_0)v_0^{m-1})_i + p_i(\omega_0) < 0,$ we obtain 
\begin{equation*}\label{subcase 2}
    \min \{n_l (v_0)_i,\;  n_l^{m-1}(\mathcal{B}(\omega_0)v_0^{m-1})_i + p_i(\omega)\}^2 \leq p_i(\omega_0)^2
\end{equation*}
using the fact $p_i(\omega_0) \leq n_l^{m-1}(\mathcal{B}(\omega_0)v_0^{m-1})_i + p_i(\omega_0) < 0.$ 
Thus combining we have
\begin{equation}\label{last theorem 3}
   \|\min\{n_l (v_0)_i,\; n_l^{m-1}(\mathcal{B}(\omega_0)v_0^{m-1})_i + p_i(\omega_0)\}\| \leq \| p_i(\omega_0)\|
\end{equation}
Putting the facts (\ref{last theorem 1}), (\ref{last theorem 2}) and (\ref{last theorem 3}) together we obtain,
\begin{equation*}
r(n_l v_0) \leq \sum_{i=1}^n \| p_i(\omega_0)\|.
\end{equation*}
This implies that $r(v)$ is not level bounded.
\end{proof}

\section{Conclusion}
In this paper, we introduce the set-valued tensor complementarity problem and provide a necessary condition for the feasibility of the solution set for SVTCP. For a set of strongly semi-positive tensors the SVTCP$(\mathcal{B}(\omega),p(\omega), \Omega)$ with $p(\omega) >0$ has zero as its unique solution. The weakly semi-positivity of the involved set of tensors provides a necessary condition for the uniqueness of zero solution of the SVTCP$(\mathcal{B}(\omega),p(\omega), \Omega)$ where $p(\omega) >0.$ In case of $\mathcal{B}(\omega_0)$ in $P$-tensor for a fixed $\omega_0$ with one additional assumption, SOL$(\mathcal{B}(\omega), p(\omega), \Omega)$ is nonempty. The limit $R_0$-property of SVTCP generalizes the concept of $R_0$-tensor in case of set-valued tensor complementarity problem. We show that the level boundedness of the merit function exists in case of the limit $R_0$-property of the set of tensors with some additional assumptions.

\section{Acknowledgment}
The author R. Deb is grateful to the Council of Scientific $\&$ Industrial Research (CSIR), India, for the financial support provided through the Junior Research Fellowship program.

\bibliographystyle{plain}
\bibliography{referencesall}

\end{document}